%
%
%
\documentclass{gtpart}     
%
%
%
%
%
\usepackage{amssymb}
%
%
%
\title[Addendum to ``Commensurations of $F$'']{Addendum to ``Commensurations and Subgroups of
Finite Index of Thompson's Group $F$''}

%
\author{Jos\'e Burillo}
\givenname{Jos\'e}
\surname{Burillo}
\address{Departament de Matem\`atica Aplicada IV\\
Universitat Polit\`ecnica de Catalunya\\
Escola Polit\`ecnica Superior de Castelldefels\\
Esteve Terrades 5\\
08860 Castelldefels (Barcelona)\\
Spain}
\email{burillo@mat.upc.es}
\urladdr{http://www-ma4.upc.edu/~burillo/}
%
\author{Sean Cleary}
\givenname{Sean}
\surname{Cleary}
\address{Department of Mathematics R8133\\
The City College of New York\\
Convent Ave \& 138th\\
New York, NY 10031}
\email{cleary@sci.ccny.cuny.edu}
\urladdr{http://www.sci.ccny.cuny.edu/~cleary/}
%
\author{Claas E. R\"over}
\givenname{Claas}
\surname{R\"over}
\address{School of Mathematics, Statistics and Applied Mathematics\\
National University of Ireland Galway\\
University Road\\
Galway\\
Ireland}
\email{claas.roever@nuigalway.ie}
\urladdr{http://www.maths.nuigalway.ie/~chew/}
%
%
%
%

\keyword{simple group}
\keyword{Thompson's group}
\keyword{commensurator}
\subject{primary}{msc2000}{20E34}
\subject{primary}{msc2000}{20E32}
\subject{secondary}{msc2000}{20F65}

%

\arxivreference{}
\arxivpassword{}

%
%
\volumenumber{}
\issuenumber{}
\publicationyear{}
\papernumber{}
\startpage{}
\endpage{}
\doi{}
\MR{}
\Zbl{}
\received{}
\revised{}
\accepted{}
\published{}
\publishedonline{}
\proposed{}
\seconded{}
\corresponding{}
\editor{}
\version{}

%
%
%
%
%
%
\newtheorem{thm}{Theorem}
\newtheorem{lem}[thm]{Lemma}          
%
\theoremstyle{definition}
%
%

\newcommand{\nn}{\ensuremath{{\mathrm{\sf I}}\!{\mathrm{\sf N}}}}%
\newcommand{\zz}{\ensuremath{\mathrm{\sf Z}\!\!\!\!\:{\mathrm{\sf Z}}}}%
\newcommand{\qq}{\ensuremath{\mathrm{\sf Q}\!\!\!\!\:\!{\sf I}\,\,}}%
\newcommand{\rr}{\ensuremath{{\mathrm{\sf I}}\!{\mathrm{\sf R}}}}%

\newcommand{\tr}[1]{\mbox{\ensuremath{\textrm{#1}}}}
\newcommand{\com}[1]{\mbox{$\tr{\rm Com}(#1)$}}
\newcommand{\coplus}[1]{\mbox{$\tr{\rm Com}^+(#1)$}}
\newcommand{\plr}{\ensuremath{P}}
\newcommand{\plpr}{\ensuremath{P_+}}

\newcommand{\quot}[2]{{\left.\raisebox{.2em}{$#1$}\middle/\raisebox{-.2em}{$#2$}\right.}}

\def\autpf{{\rm Aut}^+(F)}


\begin{document}

\begin{abstract}    
We show that the abstract commensurator of $F$ is composed of four building blocks: two isomorphism types of simple groups, the multiplicative group of the positive rationals and a cyclic group of order two. The main result establishes the simplicity of a certain group of piecewise linear homeomorphisms of the real line.
\end{abstract}

\maketitle


The purpose of this note is to extend earlier work \cite{comf}, where we described the commensurator group of Thompson's group $F$. We prove that an interesting subgroup of \com{F} is simple and describe the algebraic structure of \com{F} in terms of short exact sequence of simple groups and the multiplicative group of the positive rationals. For all the details and notation, see the paper \cite{comf}.

\section{The group of eventually periodic maps}
Previously \cite{comf} we described  the commensurator group of $F$ as the group of the eventually integrally periodically affine maps in $\plr$, which is defined in Section 1 of \cite{comf}. These elements may preserve or reverse the orientation of the real line. We also showed that the index-two subgroup $\coplus{F}$ of orientation-preserving maps fits into the short exact sequence
$$
1\longrightarrow K\longrightarrow \coplus F \longrightarrow \qq^*\times\qq^* \longrightarrow 1
$$
whose kernel $K$ is exactly those elements $f$ of $\plpr$ for which there exists $M>0$, and two positive integers $p,p'$ such that
\begin{gather*}
f(t+p)=f(t)+p \tr{ for } t\ge M\tr{ and}\\
f(t+p')=f(t)+p'\tr{ for } t\le -M.
\end{gather*}
Now we can associate to each element $f\in K$ two integrally periodically affine maps $f_+$ and $f_-$, which coincide with $f$ near $\infty$ and $-\infty$, respectively. This property leads to the following definitions.

For $p\in\nn$, we denote by $H_p$ the subgroup of $\plpr$ of $p$-periodically affine maps, that is 
$$ 
H_p=\{f\in\plpr\mid f(t+p)=f(t)+p\textrm{ for all } t\in\rr \}.
$$
Clearly, if $p|q$, then $H_p\subset H_q$, whence we define the subgroup $H$ as a direct limit under inclusion by
$$
H=\bigcup_{p=1}^{\infty}H_p.
$$

The maps $f_+$ and $f_-$ now give rise to a homomorphism
$$
\rho\co K\longrightarrow H\times H,
$$
given by $\rho(f)=(f_-,f_+)$. The kernel consists of the eventually trivial elements, and therefore equals $F'$, the commutator subgroup of $F$ (see \cite{brin} or \cite{comf}). In other words, we get the short exact sequence
$$
1\longrightarrow F'\longrightarrow K \longrightarrow H\times H \longrightarrow 1.
$$

Brin \cite{brin} showed that $\autpf=\rho^{-1}(H_1\times H_1)$ and established the short exact sequence 
$$
1\longrightarrow F\longrightarrow \autpf \longrightarrow T\times T \longrightarrow 1,
$$
where $T$ is Thompson's group $T$ (see \cite{cfp}).  Since we clearly have a map $H_1\rightarrow T$, due to the fact that a map which is $1$-periodically affine can be viewed as a map on the circle $S^1$ given by $\quot{\rr}{\zz}$, an alternative version of this sequence is
$$
1\longrightarrow F'\longrightarrow \autpf \longrightarrow H_1\times H_1 \longrightarrow 1.
$$
These two sequences are related by the short exact sequence
$$
1\longrightarrow A_1\longrightarrow H_1 \longrightarrow T \longrightarrow 1,
$$
whose kernel $A_1$ is the maps $t\mapsto t+k$ for integers $k$. Clearly $A_1$ is isomorphic to $\zz$. 

It is straightforward to verify that any element $\alpha$ of \coplus{F}\ which satisfies $\alpha(t+1)=\alpha(t)+p$ for all $t\in \rr$ conjugates $H_1$ to $H_p$ and $A_1$ to $A_p$, the group of maps of the form $t\mapsto t+kp$ with $k\in\zz$. So we clearly have a short exact sequence
$$
1\longrightarrow A_p\longrightarrow H_p \longrightarrow T \longrightarrow 1.
$$
We note that this extension is, in fact, central, and that one may view this copy of $T$ as acting on the circle of length $p$ given by $\quot{\rr}{p\zz}$. We summarise this discussion as follows.

\begin{thm} The structure of the group $\com F$ and its index-two subgroup $\coplus F$ is given by the following short exact sequences and equalities.
$$
\begin{array}c
1\longrightarrow \coplus F \longrightarrow \com F \longrightarrow C_2 \longrightarrow 1\\
\\
1\longrightarrow K\longrightarrow \coplus F \longrightarrow \qq^*\times \qq^* \longrightarrow 1\\
\\
1\longrightarrow F'\longrightarrow K \longrightarrow H\times H \longrightarrow 1,\qquad H=\displaystyle{\bigcup_{p=1}^{\infty}H_p}\\
\\
1\longrightarrow A_p\longrightarrow H_p \longrightarrow T \longrightarrow 1\\
\\
A_p\cong\zz\text{ is central in }H_p
\\
\end{array}
$$
\end{thm}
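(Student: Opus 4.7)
The plan is to assemble the five constituent statements of the theorem. Most of them are either quoted from \cite{comf} and \cite{brin} or constructed explicitly in the preceding discussion, so the proof is essentially a catalogue of those ingredients together with a handful of short verifications.

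First I would dispose of the top two sequences immediately. The orientation character $\com F \to C_2$, sending each element to $+1$ or $-1$ according to whether it preserves or reverses the orientation of $\rr$, has kernel $\coplus F$ by definition, which yields the first sequence. The second sequence, $1 \to K \to \coplus F \to \qq^* \times \qq^* \to 1$, is one of the main results of \cite{comf} and I would take it as input.

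For the third sequence I would verify that $\rho\co K \to H \times H$, $f \mapsto (f_-, f_+)$, is a well-defined surjective homomorphism with kernel $F'$. Well-definedness and the homomorphism property are immediate from the construction of $f_\pm$, while the identification $\ker\rho = F'$ follows from Brin's description of eventually trivial elements \cite{brin}, as recalled in the excerpt. The one step that takes actual work is surjectivity. Given $(h_-, h_+) \in H_{p'} \times H_p$, I would choose $M$ large enough that $h_-(-M) < h_+(M)$, and define $f$ to equal $h_+$ on $[M, \infty)$, to equal $h_-$ on $(-\infty, -M]$, and to be any increasing PL homeomorphism $[-M, M] \to [h_-(-M), h_+(M)]$ in between. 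This $f$ is eventually integrally periodically affine on both ends with periods $p$ and $p'$, hence lies in $K$, and $\rho(f) = (h_-, h_+)$ by construction. This bridging step is the main obstacle, though it is entirely routine.

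For the fourth sequence I would identify $H_p$ with a subgroup of PL homeomorphisms of the circle $\quot{\rr}{p\zz}$ of length $p$ via passage to the quotient; the image is precisely Thompson's group $T$ acting on this circle, giving the surjection $H_p \to T$. The kernel consists of $f \in H_p$ with $f(t) - t \in p\zz$ for all $t$, and since $f(t) - t$ is continuous in $t$ and takes values in the discrete set $p\zz$, it must be constant, so $f \in A_p$. Finally, centrality of $A_p$ in $H_p$ is the one-line check that for $f \in H_p$ and $\tau_{kp}(t) = t + kp$, the defining $p$-periodicity of $f$ yields $f \circ \tau_{kp}(t) = f(t+kp) = f(t) + kp = \tau_{kp} \circ f(t)$, so every element of $A_p$ commutes with every element of $H_p$.
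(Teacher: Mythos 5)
Your proposal is correct and follows essentially the same route as the paper, which simply summarises the preceding discussion: the orientation character, the sequence from \cite{comf}, the map $\rho(f)=(f_-,f_+)$ with kernel $F'$, and the circle picture identifying $\quot{H_p}{A_p}$ with $T$. The only difference is that you spell out two points the paper leaves implicit (surjectivity of $\rho$ via a PL bridge on $[-M,M]$, and the continuity/discreteness argument identifying the kernel of $H_p\to T$ with $A_p$), both of which are correct.
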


\section{Simplicity of the group $H$}

Here we exploit the well-known fact that $T$ is simple (eg. \cite{cfp}) to prove our main result.

\begin{thm} \label{main} The group $H=\{f\in\plpr\mid f(t+p)=f(t)+p\textrm{ for some } p\in \nn \}$ is simple.
\end{thm}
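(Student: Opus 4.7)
Let $N$ be a nontrivial normal subgroup of $H$; the aim is to show $N=H$. The overall plan is to combine the simplicity of $T$ on each layer $H_p$ with the direct limit $H=\bigcup_p H_p$ so that the centres $A_p$ are eventually all absorbed into $N$.

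First I would reduce to the case of a non-translation: if some $f\in N\setminus\{1\}$ equals a translation $t\mapsto t+k$, then pick $r\geq 2$ with $r\nmid k$ and any $g\in H_r\setminus H_d$ where $d=\gcd(r,k)$. Assuming $gfg^{-1}$ were a translation by $c$ gives $g(t+k)=g(t)+c$, which combined with $g(t+r)=g(t)+r$ and Bezout forces $g\in H_d$, contradicting $g\notin H_d$. Hence $gfg^{-1}\in N$ is not a translation, and we may replace $f$ by it.

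Next, fix $q$ with $f\in H_q$ and $f$ not a translation. Then $f\notin A_1\supseteq A_r$ for every $r$ with $q\mid r$, so the image of $f$ in $H_r/A_r\cong T$ is nontrivial. Simplicity of $T$ says that the normal closure of this image is all of $T$; lifting back to $H_r$ yields $N\cdot A_r=H_r$ for every such $r$. So $N$ already covers $H_r$ modulo the centre $A_r$.

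The main obstacle is to drag the centres $A_r$ themselves into $N$, since the last step only controls $N$ up to a central $\zz$. For this I would fix the translation $\tau\colon t\mapsto t+1$ and apply the previous paragraph to it: for each $r\geq 2$ with $q\mid r$, any decomposition $\tau=n_r\cdot a_r$ with $n_r\in N$ and $a_r(t)=t+m_r r\in A_r$ forces $n_r$ to be the translation $t\mapsto t+(1-m_r r)$, which is nonzero because $r\geq 2$. Hence $N\cap A_1$ is a nontrivial subgroup $m_0\zz$, and the relation $m_0\mid(1-m_r r)$ gives $\gcd(m_0,r)=1$. Varying $r=qp$ over arbitrary primes $p$ shows that $m_0$ has no prime divisor, so $m_0=1$ and $A_1\subseteq N$. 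Combining $A_r\subseteq A_1\subseteq N$ with $N\cdot A_r=H_r$ gives $H_r\subseteq N$ for every $r$ with $q\mid r$; since every $H_s$ embeds in $H_{\mathrm{lcm}(s,q)}$, we conclude $H\subseteq N$, whence $N=H$.
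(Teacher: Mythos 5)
Your argument is correct in substance, but it takes a genuinely different route from the paper's. The paper first proves a key lemma (Lemma~\ref{lemsimple}): a normal subgroup of $H_p$ is either all of $H_p$ or contained in $A_p$. Besides the simplicity of $T$, that lemma needs the perfectness of $H_1$, which the paper establishes by writing down an explicit finite presentation of $H_1$ (the presentation of $T$ with $c^3$ replaced by $[c^3,x_0]$ and $[c^3,x_1]$) and abelianising; the theorem then follows from a short case analysis in the direct limit, the case $N\cap H_p\subseteq A_p$ for all $p$ being killed by the strict nesting $A_{pr}\supsetneq A_{2pr}$. You avoid the presentation and the identity $[H_1,H_1]=H_1$ altogether: simplicity of $T$ alone gives $(N\cap H_r)A_r=H_r$ for every $r$ divisible by $q$, and you then drag the central $\zz$ into $N$ by decomposing the unit translation for varying $r$ and letting $r$ range over $qp$ for all primes $p$, so that the integer $m_0$ with $N\cap A_1=m_0\zz$ is coprime to every prime and hence equals $1$. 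What your route buys is independence from the presentation of $T$; what the paper's route buys is the stronger structural statement about each layer (the classification of normal subgroups of $H_p$, with perfectness of $H_p$ as a byproduct). Two small points in your reduction step deserve patching, though neither is a real obstruction: (i) you implicitly assume the translation length $k$ is an integer when you write $r\nmid k$ and $d=\gcd(r,k)$, whereas $H$ contains translations by arbitrary dyadic rationals; either take $d$ to be the positive generator of $r\zz+k\zz$ and argue as before, or note that a non-integer dyadic translation already has nontrivial image in every $H_r/A_r$, so the reduction is only needed for integer $k$; (ii) to get $g\in H_d$ from Bezout you should first observe that the conjugated translation constant $c$ must equal $k$ (equivalently, that the constant arising from the Bezout combination is $d$), which follows because $g(t)-t$ is bounded for $g\in H_r$.
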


Note that for $p,q\in \nn$ with $p|q$, we have $H_p\subset H_q$ and $A_p\supset A_q$. 
So the theorem says that in the union $H$ the groups $A_p$ cease to be normal. This is due to the 
following.

\begin{lem} 
A normal subgroup of $H_p$ is either $H_p$ or it is contained in $A_p$. \label{lemsimple}
\end{lem}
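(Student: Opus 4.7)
The plan is to exploit the central short exact sequence
$$
1 \longrightarrow A_p \longrightarrow H_p \stackrel{\pi}{\longrightarrow} T \longrightarrow 1
$$
recorded at the end of Section 1, together with the simplicity of Thompson's group $T$. Let $N$ be a normal subgroup of $H_p$. The first step is to look at $\pi(N)\trianglelefteq T$: since $T$ is simple, either $\pi(N)=1$, in which case $N\subseteq \ker\pi = A_p$ and we are done, or $\pi(N)=T$, which is the case requiring real work.

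Assume $\pi(N)=T$, so that $N\cdot A_p = H_p$. Then
$$
\quot{H_p}{N}\;\cong\;\quot{A_p}{(A_p\cap N)},
$$
which is a cyclic, hence abelian, group. Consequently $N$ contains the commutator subgroup $[H_p,H_p]$, and the desired conclusion $N=H_p$ will reduce to showing that $H_p$ is perfect, i.e.\ that the generator $\tau_p\co t\mapsto t+p$ of $A_p\cong\zz$ lies in $[H_p,H_p]$.

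The main obstacle is this last assertion. The strategy I would pursue exploits centrality of $A_p$: for any $\bar x,\bar y\in T$, the commutator $[\tilde x,\tilde y]$ of arbitrary lifts $\tilde x,\tilde y\in H_p$ is independent of the choice of lifts, and whenever $\bar x$ and $\bar y$ commute in $T$ we have $[\tilde x,\tilde y]\in A_p$. The plan is then to exhibit commuting elements $\bar x,\bar y\in T$ whose canonical PL lifts to $H_p$ display a non-trivial ``winding defect'' in $A_p$. A natural candidate is to take $\bar x$ a rotation of the circle $\quot{\rr}{p\zz}$ of suitable order together with an element $\bar y$ of $T$ supported on an arc, arranged so that finitely many such canonical commutators combine to produce $\tau_p$ itself. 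Once $\tau_p\in [H_p,H_p]$ is established we obtain $A_p\subseteq [H_p,H_p]$, so $H_p$ is perfect and $N=H_p$, completing the proof.
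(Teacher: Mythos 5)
You are right up to the point where everything is reduced to perfectness of $H_p$: using simplicity of $T$, either $N\subseteq A_p$ or $N A_p=H_p$, in which case $\quot{H_p}{N}$ is a quotient of $A_p\cong\zz$, hence abelian, so $N\supseteq[H_p,H_p]$ and it remains to show that the generator $\tau_p\co t\mapsto t+p$ of $A_p$ lies in $[H_p,H_p]$. This is exactly the paper's reduction (carried out there for $p=1$, which suffices since conjugation carries $(H_p,A_p)$ to $(H_1,A_1)$). The gap is that the step you leave as a ``strategy'' is the entire content of the lemma, and the strategy you sketch cannot succeed. If $\bar x,\bar y\in T$ commute, then the commutator of \emph{any} lifts $\tilde x,\tilde y\in H_p$ is not merely an element of $A_p$; it is always the identity. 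Writing $[\tilde x,\tilde y]=\tau_p^{k}$, the translation number $\mathrm{rot}(g)=\lim_{n\to\infty}(g^n(t)-t)/n$ is defined for all $g\in H_p$, is invariant under conjugation by elements of $H_p$ (because $h(s)-s$ is bounded for $h\in H_p$), and satisfies $\mathrm{rot}(\tau_p^{k}g)=kp+\mathrm{rot}(g)$ since $\tau_p$ is central; applying this to $\tilde x\tilde y\tilde x^{-1}=\tau_p^{k}\tilde y$ gives $\mathrm{rot}(\tilde y)=kp+\mathrm{rot}(\tilde y)$, so $k=0$. (Equivalently: the Euler class of any $\zz^2$-action on the circle vanishes.) In particular your concrete candidate fails twice over: a rotation and an element supported on an arc do not commute unless the support is rotation-invariant, and when they do commute the ``winding defect'' is zero. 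Hence no product of commutators of lifts of commuting pairs can ever equal $\tau_p$, and your route to $A_p\subseteq[H_p,H_p]$ is closed.

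What the paper does instead is a direct computation with a presentation. The Cannon--Floyd--Parry presentation of $T$ on generators $x_0,x_1,c$ lifts to a presentation of $H_1$ on the same three generators in which all relators are kept except $c^3$, which is replaced by the two relators $[c^3,x_0]$ and $[c^3,x_1]$; here the lift $c$ satisfies that $c^3$ is the map $t\mapsto t-1$ generating the central subgroup $A_1$. Abelianizing, the third, fourth and fifth relators become $x_0^{-1}x_1^2$, $x_0^{-3}x_1^2c$ and $x_0^{-1}x_1$, which force $x_0=x_1=c=1$, so $H_1$ has trivial abelianization and is perfect, whence $N=H_1$. Some argument of this kind---one that actually shows the central copy of $\zz$ dies in the abelianization, rather than trying to detect it by commutators over commuting pairs in $T$---is what your proposal is missing.
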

\begin{proof} In the light of the isomorphism between $H_p$ and $H_1$ which carries $A_p$ to $A_1$, it suffices to consider the case $p=1$. Let $N$ be a normal subgroup of $H_1$ and consider its image in $T$. Since $T$ is simple, the image of $N$ is either $\{1\}$ or the whole $T$. If the image is $\{1\}$, then $N\subset A_1$. So we assume that the image is $T$, which yields the exact sequence
$$
1\longrightarrow B\longrightarrow N \longrightarrow T \longrightarrow 1
$$
with kernel $B=N\cap A_1\subset A_1$. It follows that $B=A_{r}$ for some $r$, and we find that
$$
\quot{H_1}{N}\cong \quot{A_1}{A_{r}}\cong\quot{\zz}{r\zz}.
$$
In particular $\quot{H_1}{N}$ is abelian. The proof will be complete once we show that $H_1$ is equal to its commutator subgroup, because then $N=H_1$. In order to establish this, we recall from \cite{cfp} that $T$ is generated by three elements $x_0$, $x_1$ and $c$ subject to the relators

\[[x_0x_1^{-1},x_0^{-1}x_1x_0],\quad [x_0x_1^{-1},x_0^{-2}x_1x_0^2],\quad x_1x_0^{-1}cx_1c^{-1},\]
\[(x_0^{-1}cx_1)^2x_0^{-1}c^{-1},\quad x_1x_0^{-2}cx_1^2x_0^{-1}x_1^{-1}x_0x_1^{-1}c^{-1}x_0\quad\mathrm{and}\quad c^3.\]

This easily gives rise to a finite presentation for $H_1$ with three generators $x_0$, $x_1$ and $c$ subject to the same relators, except for $c^3$ which has to be replaced by the two relators $[c^3,x_0]$ and $[c^3,x_1]$.
Here $x_0$, $x_1$ and $c$ are the preimages of the corresponding generators for $T$, as defined in \cite{cfp}, with $x_0(0)=x_1(0)=0$ and $c(0)=-1/4$; composition is then to be read from right to left, as in \cite{cfp}. In this case $c^3$ is the map $t\mapsto t-1$ which generates $A_1$.
Modulo the commutator subgroup of $H_1$, the third, fourth and fifth relators yield the relators $x_0^{-1}x_1^2$, $x_0^{-3}x_1^2c$ and $x_0^{-1}x_1$, respectively, which in turn imply $x_0=x_1=c=1$. This proves that $[H_1,H_1] = H_1$.
\end{proof}

\begin{proof}[Proof of Theorem \ref{main}.] Let $N$ be a non-trivial normal subgroup of $H$. According to Lemma \ref{lemsimple}, for each $p$, we have that $N\cap H_p$ is either $H_p$ or it is contained in $A_p$.

We claim that if $N\cap H_p=H_p$ for some $p$, then this happens for all $p\in\nn$. We take $q\in\nn$. Then $N\cap H_{pq}$ is a normal subgroup of $H_{pq}$, and
$$
N\cap H_{pq}\supset N\cap H_p=H_p\supsetneq A_p\supset A_{pq},
$$
which shows that $N\cap H_{pq}= H_{pq}$, by the lemma. Thus, in this case $N$ contains all $H_q$, and hence $N=H$.

The only case left now is that $N\cap H_p\subset A_p$ for all $p$. Since $N$ is non-trivial and the  $A_p$ are infinite cyclic, there exists a $p$ with $N\cap H_p=A_{rp}$ for some $r\ge 1$. But then 
$$
A_{2pr}\supset N\cap H_{2pr}\supset N\cap H_{p}=A_{pr}\supsetneq A_{2pr}
$$
which is a contradiction. Thus the only normal subgroups of $H$ are $H$ and the identity as claimed.
\end{proof}

{We would like to thank the de Br\'un Centre at NUI Galway for its generous support. The first author acknowledges support from MEC grant
MTM2011-25955 and the second author is grateful for funding from NSF \#0811002.}

%
%
%
\bibliographystyle{gtart}

\end{document}